\documentclass[12pt,a4paper]{article}
\usepackage[T1]{fontenc}
\usepackage{amsmath,amssymb,amsthm,mathtools}
\usepackage{geometry}
\usepackage{enumitem}
\usepackage{hyperref}
\hypersetup{colorlinks=true,linkcolor=black,citecolor=black,urlcolor=black}

\theoremstyle{plain}
\newtheorem{theorem}{Theorem}
\newtheorem{lemma}{Lemma}
\theoremstyle{remark}
\newtheorem{remark}{Remark}

\title{Distance Stability for the Integral Hardy and Carleman Inequalities}
\author{Gangsong Leng\\[2mm]
School of Mathematical Sciences, East China Normal University\\
\texttt{lenggangsong@163.com}}
\date{}

\begin{document}
\maketitle

\begin{abstract}
We study stability forms of two classical integral inequalities: Hardy's integral inequality and the integral Carleman inequality, also known as the P\'olya--Knopp inequality. The sharp constants in these two inequalities are $(p')^p$ and $e$, respectively, but neither inequality has a non-trivial extremizer in its natural function space. Their formal extremal functions are $c x^{-1/p}$ and $c/x$, respectively. We first derive a deficit identity for Hardy's integral inequality. In particular, when $p=2$, the deficit is exactly a squared norm. We then prove a distance stability estimate for the integral Carleman inequality, whose remainder term directly measures a local weighted Hellinger-type distance from the family $\{c/x:c\ge0\}$. Together these results illustrate the same stability phenomenon: although the classical integral inequalities have no genuine extremizers, their deficits still measure the deviation from the corresponding families of virtual extremals.
\end{abstract}

\noindent\textbf{2020 Mathematics Subject Classification:} Primary 26D15; Secondary 26D10, 46E30.

\medskip
\noindent\textbf{Keywords:} Hardy inequality; Carleman inequality; P\'olya--Knopp inequality; stability; virtual extremal; distance remainder.

\section{Introduction}

Hardy's integral inequality asserts that, if $p>1$, $p'=p/(p-1)$ and $f\ge0$, then
\begin{equation}\label{hardy}
\int_0^\infty \left(\frac1x\int_0^x f(t)\,dt\right)^p dx
\le (p')^p\int_0^\infty f(x)^p\,dx .
\end{equation}
The constant $(p')^p$ is sharp, but there is no non-zero extremizer in $L^p(0,\infty)$. The formal equality equation is
\[
 p'f(x)= \frac1x\int_0^x f(t)\,dt,
\]
whose non-zero solutions have the form $f(x)=c x^{-1/p}$. Since $x^{-1/p}\notin L^p(0,\infty)$, this function is only a virtual extremal.

On the other hand, the integral form of Carleman's inequality is
\begin{equation}\label{carleman}
\int_0^\infty \exp\left\{ \frac1x\int_0^x \log f(t)\,dt\right\}dx
\le e\int_0^\infty f(t)\,dt .
\end{equation}
This inequality is also called the P\'olya--Knopp inequality. It is parallel to the discrete Carleman inequality
\[
\sum_{n=1}^{\infty}(a_1a_2\cdots a_n)^{1/n}
\le e\sum_{n=1}^{\infty}a_n .
\]
Its formal extremal functions satisfy $t f(t)=c$, that is, $f(t)=c/t$. Again, since $1/t$ is not integrable on $(0,\infty)$, the integral Carleman inequality has no non-zero extremizer.

Both inequalities have a long history. Hardy gave the integral form in 1925 \cite{Hardy}, and it later became one of the fundamental results in the classical theory of inequalities developed by Hardy, Littlewood and P\'olya \cite{HardyLittlewoodPolya}. Carleman's inequality arose from Carleman's work on quasi-analytic functions \cite{Carleman}, and Knopp subsequently gave a famous proof and generalization \cite{Knopp}. For the history, proofs and extensions of Carleman's inequality and the P\'olya--Knopp inequality, see Johansson--Persson--Wedestig \cite{Johansson} and Kaijser--Persson--\"Oberg \cite{Kaijser}. In the last two decades, Hardy-type operators, P\'olya--Knopp-type operators, weighted forms, probabilistic forms and measure-theoretic forms have continued to be developed. For example, Klaassen--Wellner studied Hardy, Copson and Carleman--P\'olya--Knopp inequalities from a unified probabilistic point of view \cite{KlaassenWellner}; Arias--Rodr\'iguez-L\'opez investigated weighted Carleman-type inequalities \cite{Arias}; and Kondo--Moritoh--Tanaka gave a direct proof of a weighted P\'olya--Knopp inequality following Carleson's method \cite{Kondo}.

More closely related to the present paper are stability and remainder forms of Hardy-type inequalities. For differential Hardy inequalities, there is already a substantial literature. Cianchi--Ferone proved Hardy inequalities with non-standard remainder terms, where the remainders depend on the distance from the so-called ``virtual extremals'' \cite{CianchiFerone}. Frank--Seiringer developed nonlinear ground state representations and obtained sharp Hardy-type inequalities with remainders \cite{FrankSeiringer}. The work of Duy--Lam--Triet--Yin also emphasizes the relation between exact remainder identities and virtual extremal functions \cite{Duy}. These results show that, when a sharp inequality has no genuine extremizer, its deficit should often measure the distance from an appropriate family of virtual extremals.

However, much of the existing stability theory concerns differential Hardy inequalities, fractional Hardy inequalities, or general Hardy-type frameworks. For the original Hardy averaging inequality \eqref{hardy} and the integral Carleman/P\'olya--Knopp inequality \eqref{carleman}, the literature more often focuses on sharp constants, weighted versions, sharpened forms and generalizations. For instance, \v{C}i\v{z}me\v{s}ija--Pe\v{c}ari\'c--Persson studied strengthened Hardy and P\'olya--Knopp inequalities \cite{Cizmesija}. By contrast, for the most basic Hardy averaging inequality and the integral Carleman inequality, the viewpoint of interpreting the deficit directly as a distance from the virtual extremal families
\[
 \{c x^{-1/p}:c\ge0\},\qquad \{c/x:c\ge0\}
\]
does not appear to be a standard formulation. The present paper is written from this point of view and gives two elementary and transparent stability results.

The guiding idea is simple. Since genuine extremizers do not exist, stability should not be formulated as closeness to an actual extremizer. It should instead be formulated as closeness to the corresponding family of virtual extremals. For Hardy's inequality, the virtual extremal family is
\[
 \{c x^{-1/p}:c\ge0\},
\]
whereas for Carleman's inequality it is
\[
 \{c/x:c\ge0\}.
\]
We treat these two inequalities separately and then give a short comparison at the end.

\section{A deficit identity for Hardy's integral inequality}

Let
\[
Hf(x)=\frac1x\int_0^x f(t)\,dt .
\]
The deficit in Hardy's inequality is denoted by
\[
\Delta_H(f)=(p')^p\int_0^\infty f(x)^p\,dx
-\int_0^\infty (Hf(x))^p\,dx .
\]
The following identity gives a natural stability form of Hardy's inequality. It controls exactly the residual in the equality equation and, combined with the form of the solutions of that equation, it may be interpreted as a distance stability statement with respect to the virtual extremal family.

\begin{theorem}[Hardy deficit identity]\label{thm:hardy}
Let $p>1$ and $p'=p/(p-1)$. Let $f\ge0$ and $f\in L^p(0,\infty)$. We first assume that $f$ is compactly supported and locally bounded, in order to avoid endpoint technicalities. Put
\[
 h(x)=Hf(x).
\]
If
\[
\Phi_p(r)=r^p-1-p(r-1),\qquad r\ge0,
\]
then
\begin{equation}\label{hardy-deficit}
(p')^p\int_0^\infty f(x)^p\,dx
-\int_0^\infty h(x)^p\,dx
=
\int_0^\infty h(x)^p
\Phi_p\left(\frac{p'f(x)}{h(x)}\right)dx .
\end{equation}
At points where $h(x)=0$, the integrand on the right is understood as $0$. Indeed, since $f\ge0$, the identity $h(x)=0$ implies that $f=0$ almost everywhere on $(0,x)$. The identity extends to general non-negative $L^p$ functions by a standard truncation argument; if one of the terms is infinite, it is understood in the usual extended sense.
\end{theorem}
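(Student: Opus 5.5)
Expanding the right-hand side reduces the identity to a single integral identity. Where $h>0$,
\[
h^p\Phi_p\!\left(\frac{p'f}{h}\right)=(p')^pf^p-h^p-p\,p'fh^{p-1}+p\,h^p .
\]
At points with $h=0$ we have $f=0$ a.e.\ on $(0,x)$, so $f(x)=0$ for a.e.\ such $x$, and the same expression (which is then $0$) holds there too. Integrating, \eqref{hardy-deficit} is equivalent to
\begin{equation*}
p'\int_0^\infty f\,h^{p-1}\,dx=\int_0^\infty h^p\,dx ,
\end{equation*}
provided each term is finite. Under the standing assumptions this is clear. If $f$ is bounded and supported in $[0,R]$, then $h\le\|f\|_\infty$ and $h(x)=x^{-1}\int_0^R f$ for $x>R$. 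Since $p>1$, this gives $h\in L^p$, and also $fh^{p-1}\in L^1$.

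To prove the reduced identity, write $F(x)=\int_0^x f$, which is absolutely continuous with $F'=f$ a.e. Then $h=F/x$ and $f=F'=(xh)'=h+xh'$. Hence on $(\varepsilon,M)$,
\[
\int_\varepsilon^M f h^{p-1}\,dx=\int_\varepsilon^M h^p\,dx+\int_\varepsilon^M x h^{p-1}h'\,dx .
\]
We treat the last term by integrating $(h^p)'=p h^{p-1}h'$ against $x$. The step $(h^p)'=ph^{p-1}h'$ is legitimate because $h$ is locally absolutely continuous on $(0,\infty)$ and $r\mapsto r^p$ is $C^1$ for $p>1$. This gives
\[
\int_\varepsilon^M x h^{p-1}h'\,dx=\frac1p\Big[xh^p\Big]_\varepsilon^M-\frac1p\int_\varepsilon^M h^p\,dx .
\]
Combining, $\int fh^{p-1}=(1-\tfrac1p)\int h^p+\tfrac1p[xh^p]_\varepsilon^M$. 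Multiplying by $p'$, and using $p'(1-1/p)=1$, gives the reduced identity up to the boundary term $\tfrac{p'}{p}[xh^p]_\varepsilon^M$.

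The boundary terms are the main (though mild) obstacle, and the locally bounded, compactly supported assumptions are made exactly to control them. At $0$, $\varepsilon h(\varepsilon)^p\le\varepsilon\|f\|_\infty^p\to0$. At $\infty$, $Mh(M)^p=M^{1-p}F(R)^p\to0$ because $p>1$. Letting $\varepsilon\to0$ and $M\to\infty$ proves \eqref{hardy-deficit} in the restricted class.

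For general $0\le f\in L^p$, take truncations $f_n=\min(f,n)\mathbf 1_{[0,n]}$, which increase to $f$. Then $h_n=Hf_n$ increases to $h=Hf$ pointwise. The left side of \eqref{hardy-deficit} for $f_n$ converges by monotone convergence applied to each term, both of which are finite by \eqref{hardy}. On the right, $\Phi_p\ge0$ by convexity of $r\mapsto r^p$. The integrands $h_n^p\Phi_p(p'f_n/h_n)$ converge pointwise, since $h_n\uparrow h$ and $f_n\uparrow f$ and the integrand is continuous in $(f,h)$ where $h>0$. Fatou's lemma then gives the inequality ``$\ge$'' between the right-hand side and the limit of the left-hand side.

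For the reverse inequality we use the expanded form. Every term of the expanded integrand, including the cross term $p\,p' f_nh_n^{p-1}$, is monotone in $n$. Moreover $\int fh^{p-1}\le\|f\|_p\|h\|_p^{p-1}<\infty$ by H\"older's inequality and \eqref{hardy}. So the reduced identity passes to the limit by monotone convergence, and \eqref{hardy-deficit} follows for all non-negative $f\in L^p$, with all terms finite.
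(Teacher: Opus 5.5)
Your proof is correct and follows essentially the paper's route: the paper integrates $\frac{d}{dx}\{xh^p\}=pfh^{p-1}-(p-1)h^p$ to obtain $\int_0^\infty h^p\,dx=p'\int_0^\infty fh^{p-1}\,dx$ (your integration by parts of $x(h^p)'$ is the same computation), and then subtracts $p\int_0^\infty(r-1)h^p\,dx=0$ from the deficit, which is exactly your expansion of $\Phi_p$. Your truncation paragraph fills in what the paper only asserts; the Fatou step there is superfluous (what Fatou gives is right-hand side $\le$ left-hand side), since monotone convergence in the reduced identity $p'\int_0^\infty f_nh_n^{p-1}\,dx=\int_0^\infty h_n^p\,dx$, with all terms finite, already gives equality.
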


\begin{proof}
Let
\[
F(x)=\int_0^x f(t)\,dt,
\qquad h(x)=\frac{F(x)}{x}.
\]
Then
\[
h'(x)=\frac{f(x)-h(x)}{x}.
\]
Hence
\begin{align*}
\frac{d}{dx}\{x h(x)^p\}
&=h(x)^p+p x h(x)^{p-1}h'(x)\\
&=h(x)^p+p h(x)^{p-1}\{f(x)-h(x)\}\\
&=p f(x)h(x)^{p-1}-(p-1)h(x)^p .
\end{align*}
Since $f$ is compactly supported and locally bounded, the endpoint terms satisfy
\[
\lim_{x\to0+}x h(x)^p=0,
\qquad
\lim_{x\to\infty}x h(x)^p=0.
\]
Integrating the preceding identity over $(0,\infty)$ gives
\begin{equation}\label{basic-id}
\int_0^\infty h(x)^p\,dx
=p'\int_0^\infty f(x)h(x)^{p-1}\,dx .
\end{equation}
At points where $h(x)>0$, set
\[
r(x)=\frac{p'f(x)}{h(x)}.
\]
Then
\[
(p')^p f(x)^p=r(x)^p h(x)^p,
\qquad
p'f(x)h(x)^{p-1}=r(x)h(x)^p.
\]
By \eqref{basic-id},
\[
\int_0^\infty \{r(x)-1\}h(x)^p\,dx=0.
\]
Therefore
\begin{align*}
\Delta_H(f)
&=\int_0^\infty \{r(x)^p-1\}h(x)^p\,dx\\
&=\int_0^\infty \{r(x)^p-1-p(r(x)-1)\}h(x)^p\,dx\\
&=\int_0^\infty h(x)^p
\Phi_p\left(\frac{p'f(x)}{h(x)}\right)dx .
\end{align*}
The theorem follows.
\end{proof}

Since $r\mapsto r^p$ is convex,
\[
\Phi_p(r)=r^p-1-p(r-1)\ge0,
\]
and Theorem \ref{thm:hardy} immediately implies Hardy's integral inequality. More importantly, the right-hand side measures precisely the deviation from the equality equation
\[
p'f=Hf.
\]

When $p=2$, Theorem \ref{thm:hardy} yields a particularly simple square identity.

\begin{theorem}[Quadratic stability identity for Hardy's inequality]\label{thm:hardy2}
Let $f\ge0$ and $f\in L^2(0,\infty)$. In the above sense,
\begin{equation}\label{hardy2}
4\int_0^\infty f(x)^2\,dx
-\int_0^\infty (Hf(x))^2\,dx
=
\int_0^\infty \{2f(x)-Hf(x)\}^2\,dx .
\end{equation}
\end{theorem}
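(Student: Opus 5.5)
The plan is to specialize Theorem \ref{thm:hardy} to $p=2$, so that $p'=2$, and then simplify the integrand algebraically. For $p=2$ we have
\[
\Phi_2(r)=r^2-1-2(r-1)=r^2-2r+1=(r-1)^2 .
\]
At points where $h(x)=Hf(x)>0$ this gives
\[
h(x)^2\,\Phi_2\!\left(\frac{2f(x)}{h(x)}\right)=h(x)^2\left(\frac{2f(x)}{h(x)}-1\right)^2=\{2f(x)-h(x)\}^2 .
\]
Substituting this into \eqref{hardy-deficit} yields \eqref{hardy2} at once, first for compactly supported, locally bounded $f$.

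The only point needing care is the set where $h(x)=0$. There Theorem \ref{thm:hardy} sets the integrand to $0$. Since $f\ge0$, $h(x)=0$ forces $f=0$ a.e. on $(0,x)$. Because $\{x:h(x)=0\}$ is an interval $(0,a]$ with $a\ge 0$, we get $f=0$ a.e. on it, so $\{2f-h\}^2=0$ a.e. there as well. The two integrands therefore agree almost everywhere.

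The main obstacle, and the place needing genuine work, is extending the identity to a general nonnegative $f\in L^2$ ("in the above sense"). I would take truncations $f_n=\min(f,n)\chi_{(0,n)}$, which increase to $f$. Then $Hf_n\uparrow Hf$ pointwise by monotone convergence, and the left side converges to $4\|f\|_2^2-\|Hf\|_2^2$; this is finite because $\|Hf\|_2\le 2\|f\|_2$, by Hardy's inequality for $f_n$ plus monotone convergence.

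For the right side I would not rely on monotonicity, since $2f_n-Hf_n$ is not monotone in $n$. Instead I would use $L^2$ convergence. We have $f_n\to f$ in $L^2$ by dominated convergence. By linearity of $H$ and Hardy's inequality applied to $f-f_n\ge0$,
\[
\|H f-Hf_n\|_2\le 2\|f-f_n\|_2\to0 .
\]
Hence $2f_n-Hf_n\to 2f-Hf$ in $L^2$, and the norms converge. Passing to the limit on both sides completes the proof.
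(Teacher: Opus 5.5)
Your proposal is correct and follows the paper's proof: specialize Theorem~\ref{thm:hardy} to $p=2$, note $\Phi_2(r)=(r-1)^2$, and observe that $h^2\Phi_2(2f/h)=(2f-h)^2$. Your handling of the set where $Hf=0$ and your truncation argument, using $L^2$ convergence rather than monotonicity on the right-hand side, correctly supply details that the paper leaves to a ``standard truncation argument.''
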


\begin{proof}
Take $p=2$ in Theorem \ref{thm:hardy}. Then $p'=2$ and
\[
\Phi_2(r)=r^2-1-2(r-1)=(r-1)^2.
\]
Thus
\[
h(x)^2\Phi_2\left(\frac{2f(x)}{h(x)}\right)
=\{2f(x)-h(x)\}^2,
\]
which gives \eqref{hardy2}.
\end{proof}

\begin{remark}
The equation $p'f=Hf$ is equivalent to
\[
p'F'(x)=\frac{F(x)}{x},
\qquad F(x)=\int_0^x f(t)\,dt .
\]
Hence $F(x)=C x^{1/p'}$ and therefore
\[
f(x)=\frac{C}{p'}x^{-1/p}.
\]
Thus a small Hardy deficit forces $f$ to be close, on the main scales, to the virtual extremal family $\{c x^{-1/p}:c\ge0\}$. A global $L^p$ distance statement must be formulated with truncations or logarithmic variables, since $x^{-1/p}\notin L^p(0,\infty)$.
\end{remark}

\begin{remark}
For $p=2$, this distance interpretation can be made more explicit. Define
\[
Q(s)=e^{-s/2}\int_0^{e^s}f(t)\,dt,
\qquad s\in\mathbb R .
\]
Then
\[
Q'(s)=\frac12 e^{s/2}\{2f(e^s)-Hf(e^s)\}.
\]
Consequently, Theorem \ref{thm:hardy2} gives
\[
4\int_{\mathbb R}|Q'(s)|^2\,ds
=
4\int_0^\infty f(x)^2\,dx-
\int_0^\infty (Hf(x))^2\,dx .
\]
The virtual extremal $f(x)=c x^{-1/2}$ corresponds to $Q(s)\equiv 2c$. Hence, on any finite logarithmic interval $I$, Poincar\'e's inequality yields
\[
\inf_{C\in\mathbb R}\int_I |Q(s)-C|^2\,ds
\le \frac{|I|^2}{\pi^2}\int_I |Q'(s)|^2\,ds
\le \frac{|I|^2}{4\pi^2}
\left(4\int_0^\infty f(x)^2\,dx-
\int_0^\infty (Hf(x))^2\,dx\right).
\]
This shows that the quadratic Hardy deficit controls the local distance, on logarithmic scales, of the primitive variable to the constants. This is a scale-invariant interpretation of the closeness of $f$ to $c x^{-1/2}$.
\end{remark}

\section{Distance stability for the integral Carleman inequality}

We now turn to the integral Carleman inequality. Let $f:(0,\infty)\to(0,\infty)$ be measurable and put
\[
G_f(x)=\exp\left\{\frac1x\int_0^x\log f(t)\,dt\right\}.
\]
The Carleman deficit is defined by
\[
\Delta_C(f)=e\int_0^\infty f(t)\,dt-
\int_0^\infty G_f(x)\,dx .
\]
In order to measure the closeness of $f$ to $c/t$, define, for each $x>0$,
\begin{equation}\label{Dx}
D_x(f)^2=
\inf_{c\ge0}\frac1x\int_0^x
 t\left(\sqrt{f(t)}-\sqrt{c/t}\right)^2dt .
\end{equation}
Equivalently,
\begin{equation}\label{Dx2}
D_x(f)^2=
\inf_{c\ge0}\frac1x\int_0^x
 \left(\sqrt{t f(t)}-\sqrt c\right)^2dt .
\end{equation}
This is a local weighted Hellinger-type distance from $f$ to the family $\{c/t:c\ge0\}$. It avoids the use of an ordinary $L^1$ or $L^2$ distance, which would be unnatural here because $1/t$ itself is not integrable on $(0,\infty)$.

\begin{theorem}[Carleman distance stability]\label{thm:carleman}
Let $f>0$, $\int_0^\infty f(t)\,dt<\infty$, and assume that $\log f$ is integrable on every finite interval. Then
\begin{equation}\label{carleman-stability}
\Delta_C(f)
\ge
 e\int_0^\infty D_x(f)^2\,\frac{dx}{x} .
\end{equation}
\end{theorem}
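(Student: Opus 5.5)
The plan is to rewrite the Carleman inequality in the variable $g(t)=t f(t)$, in which the virtual extremals $c/t$ become constants. The deficit then turns into an integral of an AM--GM gap. That gap should dominate the local Hellinger-type distance \eqref{Dx2} after choosing $c$ equal to the geometric mean.

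\emph{Step 1 (change of variable).} Since $\frac1x\int_0^x\log t\,dt=\log x-1$, and since $\log g=\log t+\log f$ is integrable on finite intervals (because $\log t$ is integrable near $0$), we obtain
\[
G_f(x)=\frac{e}{x}\,\mathcal G(x),\qquad \mathcal G(x)=\exp\Bigl\{\frac1x\int_0^x\log g(t)\,dt\Bigr\}.
\]
We also set $\mathcal A(x)=\frac1x\int_0^x g(t)\,dt$. Note that $\mathcal A(x)\le\frac1x\cdot x\int_0^\infty f<\infty$ is not quite the right bound, so we argue directly instead. By Tonelli,
\[
\int_0^\infty \mathcal A(x)\,\frac{dx}{x}=\int_0^\infty t f(t)\int_t^\infty\frac{dx}{x^2}\,dt=\int_0^\infty f(t)\,dt<\infty .
\]
Jensen's inequality gives $\mathcal G\le\mathcal A$, so $\int G_f$ is finite as well. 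Hence
\[
\Delta_C(f)=e\int_0^\infty\bigl(\mathcal A(x)-\mathcal G(x)\bigr)\frac{dx}{x},
\]
and all quantities in this subtraction are finite. This identity, rather than an inequality, is the analogue of Theorem \ref{thm:hardy}.

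\emph{Step 2 (pointwise AM--GM gap versus distance).} Fix $x$ and choose the competitor $c=\mathcal G(x)$ in \eqref{Dx2}. Expanding the square gives
\[
D_x(f)^2\le\frac1x\int_0^x\bigl(\sqrt{g}-\sqrt{\mathcal G(x)}\bigr)^2dt
=\mathcal A(x)-2\sqrt{\mathcal G(x)}\,\frac1x\int_0^x\sqrt{g}\,dt+\mathcal G(x).
\]
Applying Jensen's inequality to $\sqrt g$, we get $\frac1x\int_0^x\sqrt g\,dt\ge\exp\{\frac1x\int_0^x\log\sqrt g\}=\sqrt{\mathcal G(x)}$. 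Therefore the right-hand side is at most $\mathcal A(x)-2\mathcal G(x)+\mathcal G(x)=\mathcal A(x)-\mathcal G(x)$.

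\emph{Step 3.} Integrating the bound $D_x(f)^2\le\mathcal A(x)-\mathcal G(x)$ against $e\,dx/x$ and using Step 1 yields \eqref{carleman-stability}.

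The only delicate point is the bookkeeping in Step 1, namely making sure that $\Delta_C$ is a genuine difference of finite quantities before we split it. This is handled by the Tonelli computation together with $\mathcal G\le\mathcal A$. The key inequality itself, Step 2, is just Jensen's inequality applied to $\sqrt g$ with the geometric mean chosen as the competitor $c$.
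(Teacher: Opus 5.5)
Your proposal is correct and follows the paper's proof step by step. You use the substitution $u(t)=tf(t)$ with $M(x)=\frac{x}{e}G_f(x)$, the Tonelli tail identity giving $\Delta_C(f)=e\int_0^\infty (A(x)-M(x))\,\frac{dx}{x}$, and the expand-and-apply-AM--GM-to-$\sqrt{u}$ bound with competitor $c=M(x)$, which is exactly the paper's Lemma. Your explicit check that $\int_0^\infty G_f<\infty$ (via $\mathcal G\le\mathcal A$) before splitting the deficit is a small bookkeeping point that the paper leaves implicit.
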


The proof uses the following stability form of the integral AM--GM inequality.

\begin{lemma}\label{lem:agm}
Let $u>0$ be integrable on $(0,x)$ and assume that $\log u$ is integrable. Put
\[
A=\frac1x\int_0^x u(t)\,dt,
\qquad
M=\exp\left\{\frac1x\int_0^x\log u(t)\,dt\right\}.
\]
Then
\begin{equation}\label{agm-stab}
A-M\ge \frac1x\int_0^x(\sqrt{u(t)}-\sqrt M)^2dt .
\end{equation}
\end{lemma}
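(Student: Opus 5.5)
Expanding the square on the right of \eqref{agm-stab} gives
\[
\frac1x\int_0^x(\sqrt{u}-\sqrt M)^2dt = A - 2\sqrt M\cdot\frac1x\int_0^x\sqrt{u(t)}\,dt + M .
\]
So \eqref{agm-stab} is equivalent to
\[
\frac1x\int_0^x\sqrt{u(t)}\,dt \ge \sqrt M .
\]
The plan is therefore to reduce the lemma to a single application of the ordinary integral AM--GM (Jensen) inequality, applied to the function $\sqrt u$ rather than to $u$ itself.

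Step one is to verify that all the quantities are finite, so that the expansion above is legitimate. Since $u$ is integrable on $(0,x)$, Cauchy--Schwarz gives $\sqrt u\in L^1(0,x)$, with $\frac1x\int_0^x\sqrt u\le\sqrt A$. Since $\log u$ is integrable, $M$ is a well-defined positive number. Hence $A$, $M$ and $\frac1x\int_0^x\sqrt u$ are all finite, and subtracting them is valid.

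Step two applies Jensen's inequality for the concave function $\log$ with respect to the probability measure $dt/x$ on $(0,x)$:
\[
\log\Bigl(\frac1x\int_0^x\sqrt{u}\,dt\Bigr)\ge \frac1x\int_0^x\log\sqrt{u}\,dt=\frac12\cdot\frac1x\int_0^x\log u\,dt=\log\sqrt M .
\]
Here $\log\sqrt u=\frac12\log u$ is integrable by hypothesis. Exponentiating gives $\frac1x\int_0^x\sqrt u\ge\sqrt M$. Substituting this into the expansion yields
\[
\frac1x\int_0^x(\sqrt u-\sqrt M)^2\le A-2M+M=A-M ,
\]
which is exactly \eqref{agm-stab}.

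There is no substantial obstacle in this argument. The only point needing care is the integrability bookkeeping in the first step, which justifies both the expansion of the square and the use of Jensen's inequality with $\log\sqrt u\in L^1$.
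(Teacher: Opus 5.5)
Your proof is correct and is essentially the paper's argument: expand the square and apply the integral AM--GM inequality to $\sqrt u$ to obtain $\frac1x\int_0^x\sqrt{u(t)}\,dt\ge\sqrt M$. Your integrability check, that $\sqrt u\in L^1(0,x)$ by Cauchy--Schwarz, is a small step the paper leaves implicit.
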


\begin{proof}
Expanding the right-hand side, we obtain
\begin{align*}
\frac1x\int_0^x(\sqrt{u(t)}-\sqrt M)^2dt
&=A+M-2\sqrt M\,\frac1x\int_0^x\sqrt{u(t)}\,dt .
\end{align*}
By the AM--GM inequality,
\[
\frac1x\int_0^x\sqrt{u(t)}\,dt
\ge
\exp\left\{\frac1x\int_0^x\log\sqrt{u(t)}\,dt\right\}
=\sqrt M .
\]
Thus
\[
2\sqrt M\,\frac1x\int_0^x\sqrt{u(t)}\,dt\ge2M,
\]
and \eqref{agm-stab} follows.
\end{proof}

\begin{proof}[Proof of Theorem \ref{thm:carleman}]
Let
\[
u(t)=t f(t).
\]
For every $x>0$, define
\[
A(x)=\frac1x\int_0^x u(t)\,dt,
\qquad
M(x)=\exp\left\{\frac1x\int_0^x\log u(t)\,dt\right\}.
\]
Since
\[
\frac1x\int_0^x\log t\,dt=\log x-1,
\]
we have
\begin{equation}\label{M-G}
M(x)=\frac{x}{e}G_f(x).
\end{equation}
On the other hand, by Tonelli's theorem,
\begin{align}
\int_0^\infty \frac{A(x)}{x}\,dx
&=\int_0^\infty \frac1{x^2}\int_0^x t f(t)\,dt\,dx \notag\\
&=\int_0^\infty t f(t)\left(\int_t^\infty \frac{dx}{x^2}\right)dt \notag\\
&=\int_0^\infty f(t)\,dt . \label{tail-id}
\end{align}
By \eqref{M-G},
\[
\int_0^\infty G_f(x)\,dx
=e\int_0^\infty \frac{M(x)}{x}\,dx .
\]
Therefore
\begin{equation}\label{deficit-C}
\Delta_C(f)=e\int_0^\infty \frac{A(x)-M(x)}{x}\,dx .
\end{equation}
By Lemma \ref{lem:agm},
\[
A(x)-M(x)
\ge \frac1x\int_0^x
\left(\sqrt{t f(t)}-\sqrt{M(x)}\right)^2dt .
\]
Substituting this into \eqref{deficit-C}, we get
\begin{equation}\label{pre-distance}
\Delta_C(f)
\ge
 e\int_0^\infty \frac1{x^2}\int_0^x
\left(\sqrt{t f(t)}-\sqrt{M(x)}\right)^2dt\,dx .
\end{equation}
By the definition of $D_x(f)$, for each $x>0$,
\[
D_x(f)^2\le \frac1x\int_0^x
\left(\sqrt{t f(t)}-\sqrt{M(x)}\right)^2dt .
\]
Combining this with \eqref{pre-distance} gives
\[
\Delta_C(f)
\ge e\int_0^\infty D_x(f)^2\frac{dx}{x}.
\]
The theorem follows.
\end{proof}

\begin{remark}
The distance $D_x(f)$ has an explicit expression. Let
\[
y(t)=\sqrt{t f(t)}.
\]
Then
\[
D_x(f)^2=
\inf_{\alpha\ge0}\frac1x\int_0^x(y(t)-\alpha)^2dt .
\]
The minimizing value is attained at
\[
\alpha_x=\frac1x\int_0^x\sqrt{t f(t)}\,dt.
\]
Thus
\[
D_x(f)^2=\frac1x\int_0^x
\left(\sqrt{t f(t)}-
\frac1x\int_0^x\sqrt{s f(s)}\,ds\right)^2dt .
\]
Hence $D_x(f)^2$ is exactly the variance of $\sqrt{t f(t)}$ on $(0,x)$. It vanishes if and only if $t f(t)$ is almost everywhere constant on $(0,x)$, that is, if and only if $f(t)=c/t$ on that interval.
\end{remark}

We end with a brief comparison. The stability phenomena for the integral Hardy and Carleman inequalities have the same conceptual origin: neither inequality has a genuine non-zero extremizer, but each has a clear virtual extremal family. For Hardy's inequality, the virtual extremals are $c x^{-1/p}$; for Carleman's inequality, they are $c/x$. Accordingly, the Hardy deficit measures the deviation from the equality equation $p'f=Hf$, while the Carleman deficit measures the local deviation of $\sqrt{x f(x)}$ from constants.

In the Hardy case, stability is most directly expressed by the deficit identity
\[
\Delta_H(f)=\int_0^\infty (Hf)^p
\Phi_p\left(\frac{p'f}{Hf}\right)dx .
\]
This shows that the Hardy deficit controls exactly the residual in the equality equation $p'f=Hf$. In particular, when $p=2$, the deficit is the squared norm
\[
\int_0^\infty(2f-Hf)^2dx .
\]

In the Carleman case, stability is expressed as a local distance from the family $\{c/x:c\ge0\}$. Its clean form relies on the exact tail identity
\[
 t\int_t^\infty \frac{dx}{x^2}=1.
\]
This is also the reason why the integral Carleman inequality admits a simpler stability remainder than the discrete Carleman inequality. In the discrete setting, the corresponding tail summation usually gives only an estimate, and the resulting stability formula tends to involve less tidy Kober-type coefficients.

The two main results of this paper may therefore be summarized as follows: although the classical integral Hardy and Carleman inequalities have no genuine extremizers, their deficits still provide natural measures of deviation from the corresponding families of virtual extremals.

\end{document}